\DeclareMathOperator{\im}{im}
\DeclareMathOperator{\id}{id}
\DeclareMathOperator{\mcg}{MCG}
\DeclareMathOperator{\fv}{FV}
\DeclareMathOperator{\spl}{Sp}
\newcommand{\str}{{{\mathrm{str}}}}
\newcommand{\hand}{{{\mathrm{hand}}}}
\newcommand{\spin}{{\mathrm{Sp}}}
\newcommand{\cminus} {{\setminus\hspace{-2pt}\setminus}}
\newcommand{\fvz}{{{\fv_{\Z}}}}
\newcommand{\N}{\ensuremath {\mathbb{N}}}
\newcommand{\R} {\ensuremath {\mathbb{R}}}
\newcommand{\Z} {\ensuremath {\mathbb{Z}}}
\newcommand{\fil}{\ensuremath{\textrm{fill}}}
\newcommand{\norm}[1]{{\left\|#1\right\|}}
\newcommand{\norml}[1]{{\norm{#1}_{1}}}
\newcommand{\fnorm}[1]{{\norm{#1}_{\fil,\Z}}}
\newcommand\restr[2]{{
\left.\kern-\nulldelimiterspace 
#1 
\vphantom{\big|} 
\right|_{#2} 
}}
\newcommand\rrestr[2]{{
\left.\kern-\nulldelimiterspace 
\left.\kern-\nulldelimiterspace 
#1 
\vphantom{\big|} 
\right|\hspace{-2.4pt} 
\right|_{#2} 
}}
\renewcommand{\phi}{\varphi}
\renewcommand{\colon }{\,:}
\newtheorem{lemma}{Lemma}[section]
\newtheorem{thm}[lemma]{Theorem}
\newtheorem{prop}[lemma]{Proposition}
\newtheorem*{prop*}{Proposition}
\newtheorem{prop_intro}{Proposition}
\newtheorem{quest_intro}[prop_intro]{Question}
\newtheorem{thm_intro}[prop_intro]{Theorem}
\newtheorem{cor_intro}[prop_intro]{Corollary}
\theoremstyle{definition}
\newtheorem{defn}[lemma]{Definition}
\theoremstyle{remark}
\newtheorem{rmk}[lemma]{Remark}
\newtheorem{step}{Step}
\newtheoremstyle{citing}
{3pt}
{3pt}
{\itshape}
{}
{\bfseries}
{.}
{.5em}
{\thmnote{#3}}
\theoremstyle{citing}
\newtheorem*{varthm}{}
\newcommand\thankssymb[1]{\textsuperscript{\@fnsymbol{#1}}}
\begin{document}

\title[]{Integral filling volume, complexity and integral simplicial volume of $3$-dimensional mapping tori}


\author[Federica Bertolotti]{Federica Bertolotti\thankssymb{1}}\thanks{\thankssymb{1}Partially supported by the INdAM GNSAGA Project, CUP E55F22000270001.}
\address{Scuola Normale Superiore, Pisa, Italy}
\email{federica.bertolotti@sns.it}

\author[]{Roberto Frigerio\thankssymb{2}}\thanks{\thankssymb{2}Partially supported by INdAM through GNSAGA, and by MUR through the PRIN project
n. 2022NMPLT8 ``Geometry and topology of manifolds'', CUP I53D23002370001}
\address{Dipartimento di Matematica, Universit\`a di Pisa, Italy}
\email{roberto.frigerio@unipi.it}



\begin{abstract}
	We show that the integral filling volume of a Dehn twist $f$ on a closed oriented surface vanishes, i.~e.~that the integral simplicial volume of the mapping torus with monodromy $f^n$
	grows sublinearly with respect to $n$. We deduce a complete characterization of mapping classes on surfaces with vanishing integral filling volume and, building on results by Purcell and Lackenby on the complexity of mapping tori, we show that, in dimension three, complexity and integral simplicial volume are not Lipschitz equivalent.
\end{abstract}

\maketitle

\section{Introduction}
Unless otherwise stated, in this paper every manifold is understood to be just a topological manifold, and is assumed to be oriented, closed and connected.
The complexity of a manifold $M$ may be measured by many numerical invariants. Among them, one of the most natural is probably the \emph{$\Delta$-complexity} $\Delta(M)$, i.~e.~the minimal number of simplices in a (loose) triangulation of $M$ (see Definition~\ref{complexity:def}; if $M$ does not admit any triangulation, then we understand that
$\Delta(M)=\infty$). Any triangulation of $M$ gives rise to an integral fundamental cycle of $M$,
and an interesting invariant of $M$ is the minimal number of simplices in (or, more precisely, the $\ell^1$-norm of) an integral fundamental cycle for $M$ in singular homology: this positive integer
is the \emph{integral simplicial volume} of $M$, and it is usually denoted by $\|M\|_\Z$ (see Definition~\ref{sv:def}). Integral cycles are more flexible than triangulations, and real cycles are more flexible than integral ones. One may then take a step further and consider the infimum of the $\ell^1$-norms of \emph{real} fundamental cycles of $M$ in singular homology, thus getting the \emph{simplicial volume} $\|M\|$ of $M$ (see again Definition~\ref{sv:def}),
which first appeared in the proof of Mostow Rigidity Theorem described in~\cite{thurston_geometry_1979} and attributed by Thurston to Gromov (see also~\cite{bourbaki}), 
and was then thoroughly studied as an invariant of independent interest in Gromov's pioneering paper~\cite{Gromov82}.

While the simplicial volume is closely related to the geometric structures that a manifold can carry, $\Delta$-complexity and integral simplicial volume are more combinatorial in nature, and may be exploited to estimate algebraic topological invariants: for example,
both $\Delta$-complexity and integral simplicial volume provide upper bounds for Betti numbers, hence for the Euler characteristic. Since the Euler characteristic is multiplicative with respect to finite coverings, it is bounded from above also by the stable $\Delta$-complexity and the stable integral simplicial volume (which are obtained by stabilizing the corresponding invariants over finite coverings). These facts, together with a long-standing conjecture by Gromov on the relationship between the simplicial volume and the Euler characteristic for aspherical manifolds, have driven a lot of attention to stable integral simplicial volume, which has recently been proven to be equal to the classical simplicial volume for all aspherical $3$-manifolds~\cite{stable1} (in higher dimension, stable integral simplicial volume is not equal to simplicial volume even on the smaller class of hyperbolic manifolds~\cite{stable2}).

\subsection*{$\Delta$-complexity versus integral simplicial volume}
In this paper we concentrate our attention on the (rather unexplored) comparison between $\Delta$-complexity and integral simplicial volume.
Let us say that two numerical invariants $h_1,h_2$ on the class of closed oriented $n$-manifolds are \emph{equivalent} if there exists a constant $k\geq 1$ such that
$h_1(M)\leq k\cdot h_2(M)$ and $h_2(M)\leq k\cdot h_1(M)$ for every closed oriented $n$-manifold $M$.

While the $\Delta$-complexity and the integral simplicial of a manifold are obviously positive, the simplicial volume vanishes on very large classes of manifolds (e.g.~on manifolds with an amenable fundamental group). As a consequence, the simplicial volume is not equivalent to the integral simplicial volume, neither to the $\Delta$-complexity.
The integral simplicial volume and the $\Delta$-complexity are equivalent (in fact, they coincide!) in dimension one (trivially, since the only closed $1$-manifold is $S^1$ and
$\Delta(S^1)=\|S^1\|_\Z=1$) and in dimension two, since $\Delta(S^2)=\|S^2\|_\Z=2$ and $\Delta(\Sigma_g)=\|\Sigma_g\|_\Z=4g-2$ for every closed oriented
surface of genus $g\geq 1$ (see~\cite[Proposition 4.3]{Claraodd}). In this paper we prove the following:

\begin{thm_intro}\label{nonequivalent}
	In dimension $3$, integral simplicial volume and $\Delta$-complexity are not equivalent. More precisely, there exists a sequence $M_i$ of $3$-manifolds such that
	$$
		\lim_{i\to \infty} \frac{ \Delta(M_i)}{\|M_i\|_\Z}=+\infty \ .
	$$
\end{thm_intro}

The manifolds appearing in the sequence described in  Theorem~\ref{nonequivalent} are all torus bundles over the circle. Indeed,
if $N$ is an $n$-dimensional manifold and $f\colon N\to N$ is a homeomorphism, we denote by
$N\rtimes_f  S^1$  the $(n+1)$-dimensional manifold fibering over $S^1$ with monodromy $f$, i.~e.
\[N\rtimes_f  S^1 =\frac{N \times [0,1]} {\sim}\ ,\]
where $(x,0)\sim(f(x),1)$ for all $x \in N$.
Let now $T=S^1\times S^1$ be the 2-dimensional torus and let $f\colon T\to T$ be a Dehn twist along a homotopically non-trivial simple closed curve in $T$.
Then the manifold
$M_i$ mentioned in Theorem~\ref{nonequivalent} is the mapping torus of $f^i$, i.~e.~$M_i=T \rtimes_{f^i} S^1$.

Our proof of  Theorem~\ref{nonequivalent} relies on the (separate) study of the $\Delta$-complexity and of the integral simplicial volume of mapping tori. In fact we prove that, if $f\colon T\to T$
is a Dehn twist on the torus, then the $\Delta$-complexity of $T \rtimes_{f^i} S^1$ grows linearly with respect to $i$ (see Theorem~\ref{thm: triangulation complexity of nilmanifolds}), while
its  integral simplicial volume grows sublinearly with respect to $i$ (see Corollary~\ref{torus:cor}), and this of course suffices to deduce Theorem~\ref{nonequivalent}.

\subsection*{$\Delta$-complexity of $3$-manifolds fibering over the circle}
The $\Delta$-com\-plex\-i\-ty of manifolds fibering over the circle has been extensively studied by Lackenby and Purcell in~\cite{lackenby_triangulation_2019,lackenby_triangulation_2022}. Exploiting their techniques and building on their results we prove here the following:

\begin{thm_intro}\label{thm: triangulation complexity of nilmanifolds}
	Let $f \colon T \to T$ be a self-homeomorphism of the torus representing an infinite-order element in $\mcg(T)$. Then  the $\Delta$-complexity  of the mapping torus of $f^i$ has linear growth with respect to $i$, i.~e.~there exist constants $0<k_1\leq k_2$ such that
	$$
		k_1\leq \frac{\Delta \left(T \rtimes_{f^i} S^1\right)}{i}\leq k_2
	$$
	for every $i\geq 1$.
\end{thm_intro}
Notice that the homeomorphism $f \colon T \to T$ satisfies the hypothesis of the theorem if and only it $f$ is a non-trivial power of a Dehn twist or an Anosov homeomorphism.
In particular, when $f\colon T\to T$ is an Anosov automorphism of the torus, the conclusion of Theorem~\ref{thm: triangulation complexity of nilmanifolds} is proved to hold
in~\cite[Theorem 1.4]{lackenby_triangulation_2022}, where explicit estimates for $k_1$ and $k_2$ are also provided. Hence, our contribution mainly consists in adapting Lackenby's and Purcell's strategy to the Dehn twist case.


\subsection*{Filling volume and integral simplicial volume}
In order to study the integral simplicial volume of $T \rtimes_{f^i} S^1$ we make use of an invariant recently introduced in~\cite{bertolotti_length_2022}: the filling volume.

Let $f\colon M\to M$ be an orientation preserving self-homeomorphism of a manifold $M$.
The integral (resp.~real) filling volume $\fv_\mathbb{Z}(f)$ (resp.~$\fv_{\mathbb{R}}(f)$) of  $f$ is a numerical invariant
defined in terms of the action of $f$ on fundamental cycles of $M$ (see Definition~\ref{fv:def}).
In the real case, Theorem 5 of~\cite{bertolotti_length_2022} shows that the filling volume of a homeomorphism $f$ is equal to the simplicial
volume of the mapping torus of $f$, i.~e.~that
\begin{equation}\label{simplvol:eq}
	\fv_\R(f)=\|M\rtimes_f S^1\|\ .
\end{equation}

The proof of~\cite[Theorem 5]{bertolotti_length_2022} may be easily adapted to show the following result, which will be proved in Section~\ref{filsimpl:sec}:

\begin{thm_intro}\label{thm: IntegralFillingVolumeAndCovering}
	Let  $f \colon  M \to M$ be an orientation preserving homeomorphism of a manifold $M$.
	Then, \[\fv_\Z(f)=\lim_{i \to \infty} \frac{\|M\rtimes_{f^i} S^1\|_\Z} {i}.\]
\end{thm_intro}

Equation~\eqref{simplvol:eq} provides a satisfactory description of the real filling volume of a homeomorphism,
and has been exploited, for example, in order to classify self-homeomorphisms of surfaces with positive (resp.~vanishing) real filling volume
(see~\cite[Corollary 9]{bertolotti_length_2022}). On the contrary,
the integral filling volume seems to be a bit more elusive. In Section~\ref{surfaces:sec} we prove the following result, which completely describes the behaviour of the integral filling volume
on the mapping class group of surfaces:

\begin{thm_intro}\label{filvol:surfaces}
	Let $S$ be the closed oriented surface of genus $g$ and let
	$f\colon S\to S$ be an orientation-preserving self-homeomorphism of $S$. Then
	$\fvz(f)>0$ if and only if one of the following conditions is satisfied:
	\begin{itemize}
		\item  $g = 1$ and $f$ is Anosov;
		\item $g>1$ and there exist a natural number $k\in\mathbb{N}$ and a subsurface $S'\subseteq S$ (that is, a codimension-0  submanifold $S'$ of $S$, possibly with boundary) such that $f^k$ is isotopic to a homeomorphism
			$h\colon S \to S$ that satisfies $h(S')=S'$ and restricts to a pseudo-Anosov self-homeomorphism $h|_{S'}$ of $S'$.
	\end{itemize}
\end{thm_intro}

The key step in our approach to Theorem~\ref{filvol:surfaces} is the proof that the integral filling volume of a  Dehn twist on the torus vanishes (see Theorem~\ref{thm:fv_Z-Dehn}).
We then extend this result to Dehn twists on higher genus surfaces (Theorem~\ref{thm: DehnTwistSuperficie}) and we finally conclude
the proof of Theorem~\ref{filvol:surfaces} by exploiting some non-vanishing results
for the \emph{real} integral  filling volume from~\cite{bertolotti_length_2022} (which imply the corresponding non-vanishing results for the integral filling volume).

Theorem~\ref{filvol:surfaces} and~\cite[Corollary 9]{bertolotti_length_2022} imply that, for every self-homeomor\-phism $f$ of a surface $S$ of genus $g\geq 2$,
we have $\fvz(f)=0$ if and only if $\fv_\mathbb{R}(f)=0$.  However, it is shown in~\cite{bertolotti_length_2022} that, if $f$ is an Anosov automorphism of the torus,
then  $\fv_\mathbb{R}(f)=0$, while $\fvz(f)>0$ (and also in higher dimensions
the vanishing of the real filling volume does not ensure the vanishing of the integral filling volume, see~\cite[Corollary 13]{bertolotti_length_2022}). In particular, the integral filling volume succeeds in distinguishing an Anosov automorphism of the torus from a Dehn twist, while the real filling volume does not.

Putting together Theorems~\ref{thm: IntegralFillingVolumeAndCovering} and~\ref{filvol:surfaces} we readily deduce the following:

\begin{cor_intro}\label{torus:cor}
	Let  $f\colon T\to T$ be the Dehn twist along a homotopically non-trivial simple closed curve of the torus. Then
	$$
		\lim_{i \to \infty} \frac{\|M\rtimes_{f^i} S^1\|_\Z} {i}= \fv_\Z(f)=0\ ,
	$$
\end{cor_intro}
As explained above, Corollary~\ref{torus:cor} and Theorem~\ref{thm: triangulation complexity of nilmanifolds} finally imply Theorem~\ref{nonequivalent}.

\subsection*{A glimpse into higher dimensions}

In higher dimensions, the equivalence between  integral simplicial volume and $\Delta$-complexity fails in dramatic ways: 
for example, when an $n$-manifold $M$ does not admit a triangulation (in which case  $\Delta(M)=\infty$, while of course $\|M\|_\Z$ is finite), or
when infinitely many pairwise non-homeomorphic $n$-manifolds share the same integral simplicial volume.

In dimension $3$ it is known that both these phenomena cannot occur: every $3$-manifold admits a triangulation, and a result by Clara L\"oh ensures that, for every $k\in\mathbb{N}$,
up to homeomorphism there exist only finitely many $3$-manifolds with integral simplicial volume equal to $k$~\cite[Proposition 5.3]{Claraodd}. However, in dimension $n\geq 4$ there exist non-triangulable manifolds,
and there also exist infinite collections of pairwise non-homeomorphic $n$-manifolds sharing the same integral simplicial volume. 
Indeed, since homotopy equivalent manifolds share the same integral simplicial volume, the last sentence readily follows from 
\cite[Theorem 1.2]{infinite}, where the authors construct, for every $n\geq 4$, an infinite collection
of closed, orientable,
topological $n$-manifolds that are all simple homotopy equivalent to each other but pairwise not
homeomorphic (if $n>4$, these manifolds can be chosen to be smooth, hence triangulable). 

To the best of the authors' knowledge, the following question is still open:

\begin{quest_intro}\label{finite-to-one:quest}
	Let $n\geq 4$ and $k\in\mathbb{N}$ be fixed. Is the number of homotopy classes of $n$-manifolds $M$ such that
	$$
		\|M\|_\mathbb{Z}\leq k
	$$
	finite?
\end{quest_intro}

Recall that a manifold $M$ \emph{$d$-dominates} a manifold $N$ if there exists a degree-$d$ map $f\colon M\to N$.
As observed by L\"oh, in dimension 3 the fact that the integral simplicial volume is finite-to-one follows from the 
fact that, for any fixed $3$-manifold and any fixed $d>0$, 
the number of the homeomorphism classes of $3$-manifolds that are $d$-dominated by $M$ is finite~\cite{Liu}. Also in higher dimensions Question~\ref{finite-to-one:quest} is related
to the (probably difficult) question whether a fixed manifold $M$ could $d$-dominate an infinite number of pairwise non-homotopic manifolds, or not.

\subsection*{Plan of the paper} In Section~\ref{filsimpl:sec} we recall the definitions of filling volume and of simplicial volume, and we prove Theorem~\ref{thm: IntegralFillingVolumeAndCovering}. In Section~\ref{surfaces:sec} we compute the integral filling volume of the Dehn twist on the torus, and we then prove Theorem~\ref{filvol:surfaces}.
Finally,
Section~\ref{complexity:sec} is devoted to the proof of Theorem~\ref{thm: triangulation complexity of nilmanifolds} concerning
the $\Delta$-complexity of $3$-dimensional nilmanifolds and solmanifolds.

\section{Filling volumes and simplicial volumes}\label{filsimpl:sec}
Let $R=\R$ or $\Z$, and
let $M$ be an $n$-dimensional manifold (recall that, in this paper, every manifold is assumed to be closed, connected and oriented). Let $C_*(M,R)$ denote the complex of singular chains on $M$ with coefficients in $R$, and
for every $i\in\mathbb{N}$ denote by $Z_i(M,R)\subseteq C_i(M,R)$ (resp.~$B_i(M,R)\subseteq C_i(M,R)$) the subspace of degree-$i$ cycles (resp.~boundaries).
We endow $C_*(M,R)$  with the usual $\ell^1$-norm such that, if $c=\sum_{k\in I} a_k\sigma_k$ is a singular chain written in reduced form, then
$$
	\|c\|_1=\left\| \sum_{k\in I} a_k\sigma_k\right\|=\sum_{k\in I} |a_k|\ .
$$
On the space $B_i(M,R)$ of boundaries there is also defined the \emph{filling norm} $\|\cdot \|_{\fil, R}$ such that, for every $z\in B_i(M,R)$,
$$
	\|z\|_{\fil,R} =\inf \{\|b\|_1\, |\, b\in C_{i+1}(M,R)\, ,\ \partial b=z\}\ .
$$
Recall that $H_n(M,\Z)\cong \Z$ is generated by the \emph{fundamental class} $[M]_\Z$ of $M$, and
that an integral fundamental cycle for $M$ (or $\Z$-fundamental cycle) is just any representative of $[M]_\Z$. We denote by $[M]_\R\in H_n(M,\R)$ the \emph{real} fundamental class of $M$, i.~e.~the image of the fundamental class $[M]_\Z$
via the change of coefficient map $H_n(M,\Z)\to H_n(M,\R)$. An $\R$-fundamental cycle (or \emph{real} fundamental cycle) of $M$ is any representative of $[M]_\R$ in $Z_n(M,\R)$.

\begin{defn}\label{sv:def}
	The \emph{$R$-simplicial volume} of $M$ is
	$$
		\|M\|_R=\inf \{\|z\|_1\, |\, z\ \text{is an}\ R-\text{fundamental cycle for}\ M\}\ .
	$$
\end{defn}

The real simplicial volume $\|M\|_\R$ is just the classical simplicial volume, and it is denoted simply by $\|M\|$.

If $f\colon M\to M$ is a map, we denote by $f_*\colon C_*(M,R)\to C_*(M,R)$ the induced map on singular chains.
Observe that $f_*$
is norm non-increasing both with respect to the $\ell^1$-norm,
and (on the subspace of boundaries) with respect to the filling norm.

The following invariant, called the \emph{filling volume} of $f$, was defined in~\cite{bertolotti_length_2022}.

\begin{defn}\label{fv:def}
	Let $f\colon M\to M$ be an orientation preserving self-home\-o\-mor\-phism
	and let $z\in C_n(M,R)$ be an $R$-fundamental cycle for $M$.
	We set
	$$
		\fv_R(f)=\lim_{m\to \infty} \frac{ \|f^m_*(z)-z\|_{\fil,R}}{m}\ ,
	$$
	where $\|\cdot \|_{\fil,R}$ denotes the filling norm on $B_n(M,R)$. (The symbol $\fv$ stands for \emph{filling volume}).

It is proved in~\cite{bertolotti_length_2022} that
the invariant $\fv_R(f)$ is well defined, i.~e.~the above limit (which exists and is finite thanks to Fekete's Lemma) does not depend on the choice of the fundamental cycle $z$.
Moreover,
it turns out that $\fv_R(f)$ only depends on the homotopy class of $f$ (see  again \cite{bertolotti_length_2022}), and
it follows from the very definition that $\fv_\R(f)\leq \fv_\Z(f)$.
\end{defn}

\begin{rmk}\label{FVchar}
	Fekete's Lemma and the fact that  $\fv_R(f)$ does not depend on the choice of the fundamental cycle $z$
	also imply the following characterization of the filling volume, which will be exploited in the proof of
	Theorem~\ref{thm: IntegralFillingVolumeAndCovering} (see~\cite[Proposition 2]{bertolotti_length_2022} for the details):
		$$
	\fv_R(f)=\inf\left\{ \frac{ \|f^m_*(z)-z\|_{\fil,R}}{m}\ \big|\ z\ \text{fundamental cycle for}\ M,\, m\in\mathbb{Z}\right\}\ .
	$$
\end{rmk}

Recall that the symbol $M\rtimes_{f} S^1$ denotes the mapping torus of $f$,  i.~e.~the manifold obtained from
$M\times [0,1]$ by identifying $(x,0)$ with $(f(x),1)$ for every $x\in M$.
The following very neat relationship between $\fv_\mathbb{R} (f)$ and the mapping torus of $f$ is proved in~\cite{bertolotti_length_2022}:

$$
	\fv_\R (f)=\|M\rtimes_{f} S^1\|\ .
$$

By adapting the proof of~\cite[Theorem 5]{bertolotti_length_2022}, we prove here an analogous (but slightly more involved) relationship between the \emph{integral} filling volume of $f$ and the \emph{integral} simplicial
volume of its mapping torus:

\begin{varthm}[Theorem~\ref{thm: IntegralFillingVolumeAndCovering}]
	Let  $f \colon  M \to M$ be an orientation preserving homeomorphism of a manifold $M$.
	Then \[\fv_\Z(f)=\lim_{i \to \infty} \frac{\|M\rtimes_{f^i} S^1\|_\Z} {i}.\]
\end{varthm}
\begin{proof}
	Let $z$ be an integral fundamental cycle for $M$. It is not difficult to construct a (relative) fundamental cycle for $M\times [0,1]$
	of norm $(n+1)\|z\|_1$ whose boundary is equal to $(i_0)_*(z)-(i_1)_*(z)$, where $i_j\colon M\to M\times \{j\}$ is the inclusion. In order to
	``close up'' this fundamental cycle to obtain a fundamental cycle for $M\rtimes_{f} S^1$ one may sum it with $(i_1)_*(b)$,
	where $b\in C_{n+1}(M,\mathbb{Z})$ is an element such that $\partial b=z-f_*(z)$. This gives the estimate
	$$
		\|M\rtimes_{f} S^1\|_\mathbb{Z}\leq  (n+1)\|z\|_1+ \|f_*(z)-z\|_{\fil,\mathbb{Z}}
	$$
	(see \cite[Lemma 3.1]{bertolotti_length_2022} for a detailed proof).
	By replacing the homeomorphism $f$ with a power $f^i$ and dividing by $i$ we obtain
		$$
		\frac{\|M\rtimes_{f^i} S^1\|_\mathbb{Z}}{i}\leq \frac{(n+1)\|z\|_1}{i}+\frac{\|f^i_*(z)-z\|_{\fil,\Z}}{i}\ .
	$$
	By taking the limit in this inequality as $i\to +\infty$, we then get the inequality
	\[\limsup_{i \to \infty} \frac{\|M\rtimes_{f^i} S^1\|_\Z} {i} \leq \fv_\Z(f)\ .\]

	In order to conclude, we are now left to show that
	$$ \fv_{\mathbb{Z}} (f)\leq
		\liminf_{i \to \infty} \frac{\|M\rtimes_{f^i} S^1\|_{\mathbb{Z}}}{i} \ .
	$$
	Of course, it is sufficient to show that
	$$
		\fv_{\mathbb{Z}} (f)\leq \frac{\|M\rtimes_{f^i} S^1\|_{\mathbb{Z}}}{i}
	$$
	for every $i\in\mathbb{N}$.

	To this aim,
	we rely on the proof of~\cite[Theorem 5]{bertolotti_length_2022}, which shows that, if
	$z\in C_{n +1}(M\rtimes_f S^1,\mathbb{Z})$ is an integral fundamental cycle such that $\|z\|_1=\|M\rtimes_f S^1\|_\mathbb{Z}$,
	then one can find $m\in\mathbb{N}$ big enough and
	a chain $w\in C_{n+1}(M,\mathbb{Z})$ such that:
	\begin{enumerate}
		\item $ \|w \|_1 \leq m \cdot \|z \|_1$, and
		\item $\partial w = b - f^{m}_* (b)$ for some fundamental cycle $b$ of $M$.
	\end{enumerate}
	(While~\cite[Theorem 5]{bertolotti_length_2022} concerns the case with real coefficients, the proof of this specific fact works with any coefficients.) As a consequence, we
	get
	$$
		\fv_\mathbb{Z}(f)\leq \frac{\|b - f^{m}_* (b)\|_{\fil,\mathbb{Z}}}{m}\leq \frac{\|w\|_1}{m}\leq \|z\|_1=\|M\rtimes_f S^1\|_{\mathbb{Z}}\ ,
	$$
	where the first inequality is due to the characterization of the filling volume described in Remark~\ref{FVchar}.
	For every $i\in\mathbb{N}$ we may replace $f$ with $f^i$ in the inequality above, thus getting
	$$
		\fv_\mathbb{Z}(f^i)\leq\|M\rtimes_{f^i} S^1\|_{\mathbb{Z}}\ .
	$$
	Finally, by exploiting the fact that $\fv_{\mathbb{Z}}(f^i)=i\cdot \fv_{\mathbb{Z}}(f)$ (see~\cite[Theorem 2]{bertolotti_length_2022}), we may deduce that
	$$
		\fv_{\mathbb{Z}} (f)\leq \frac{\|M\rtimes_{f^i} S^1\|_{\mathbb{Z}}}{i}\ ,
	$$
	as desired. This concludes the proof.

\end{proof}
\begin{rmk}
The proof of Theorem \ref{thm: IntegralFillingVolumeAndCovering} does not exploit any peculiar property of integral coefficients. Indeed, 
if $R$ is a normed ring with unity, then the notions of fundamental cycle, $\ell^1$-norm and $R$-filling norm make sense. Hence, one can define the
$R$-simplicial volume $\|\cdot \|_R$ and the filling volume $FV_R(\cdot)$, and the statement of 
Theorem \ref{thm: IntegralFillingVolumeAndCovering} holds true with $\mathbb{Z}$ replaced by $R$. (Since the classical simplicial
volume is multiplicative with respect to finite coverings, when $R=\mathbb{R}$  Theorem~\ref{thm: IntegralFillingVolumeAndCovering}
just gives back~\cite[Theorem 5]{bertolotti_length_2022}, which states
that $FV_{\mathbb{R}}(f)= \|M\rtimes_f S^1\|$.)
\end{rmk}

\section{The integral filling volume of surface automorphisms}\label{surfaces:sec}

This section is devoted to the study of the integral filling volumes of self-homeomorphisms of surfaces.
We first show that the integral filling volume vanishes for Dehn twists on a torus (Theorem~\ref{thm:fv_Z-Dehn}).
Then, we use this result to prove
that the same holds for Dehn twists on any closed orientable surface (Theorem~\ref{thm: DehnTwistSuperficie}). Finally, we put together these results with the description of the \emph{real} filling volume of surface automorphisms given in~\cite{bertolotti_length_2022} to obtain a complete characterization of the surface automorphisms with  vanishing
integral filling volume (Theorem~\ref{filvol:surfaces}).

\subsection{Filling Volume of Dehn twists on the torus}

We devote the whole subsection to the proof of the following result:

\begin{thm}\label{thm:fv_Z-Dehn}
	Let $T$ be the torus and let $f\colon  T \to T$ be a Dehn twist along any homotopically non-trivial simple closed curve of $T$. Then $\fv_\Z(f)= 0$.
\end{thm}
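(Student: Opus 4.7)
The plan is to invoke Theorem~\ref{thm: IntegralFillingVolumeAndCovering} (equivalently, Remark~\ref{FVchar}) and reduce the problem to showing sublinear growth of the integral simplicial volume of the Heisenberg nilmanifold $M_i = T \rtimes_{f^i} S^1$, that is, $\|M_i\|_\Z = o(i)$, or equivalently to producing, for every $\varepsilon > 0$, an integral fundamental cycle $z$ of $T$ and an integer $m \geq 1$ with $\|f^m_*(z) - z\|_{\fil, \Z} < \varepsilon m$. Since $\fv_\Z$ is a conjugacy invariant in $\mcg(T)$, one first normalizes to the standard shear $f(x, y) = (x + y, y)$ on $T = \R^2/\Z^2$, so that each $M_i$ becomes an explicit Heisenberg nilmanifold with natural cyclic covers $M_{ij} \to M_i$ of degree $j$. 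A key combinatorial feature of the normalization is that $f^k$ fixes every vertex of the $(1/k)$-grid on $T$, because $f^k(a/k, b/k) = (a/k + b, b/k) \equiv (a/k, b/k) \pmod{\Z^2}$ whenever $b \in \Z$.

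A natural first attempt is a \emph{lift-and-sweep} construction. Starting from a small fundamental cycle $z$ with $\|z\|_1 = 2$ (available by $\|T\|_\Z = 2$), I would lift $z$ to $\tilde z \in C_2(\R^2,\Z)$ and push the standard prism decomposition of $\tilde z \times [0,1]$ along the affine homotopy $\tilde H(x, y, t) = (x + t m y, y)$ from $\id$ to $\tilde f^m$, then project the resulting 3-chain back to $T$. This yields a candidate filling $b$ with $\|b\|_1 = O(1)$, together with a boundary correction along the horizontal curve $\gamma = \{y = 0\}$. One checks that the two vertical sides of the lift cancel after projection (they differ by a deck translation in the $x$-direction), but the two horizontal sides leave a residual 2-chain on $\gamma$ whose "winding number" along $\gamma$ is of order $m$. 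The obvious ways of filling this residual cost $\Theta(m)$ 3-simplices, yielding only the ratio $O(1)$, not $o(1)$; so something extra is required.

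The main obstacle, and what I expect to be the technical heart of the proof, is therefore to fill the winding-$m$ residual with $o(m)$ integer 3-simplices. The analogous real-coefficient question is trivial, since $\|M_i\| = 0$ gives $\fv_\R(f) = 0$ for free via averaging, but that averaging argument breaks over $\Z$. To recover the missing factor, my plan would be to exploit the self-similar structure of the Heisenberg nilmanifold: the Lie-group dilations $(a, b, c) \mapsto (ka, b, kc)$ induce self-coverings of $M_1$ (and of each $M_i$) of arbitrarily large degree $k^2$, and allow one to pull back fundamental cycles and average them in ways that multiply the fundamental class by $k^2$ while increasing the $\ell^1$-norm only by a sub-polynomial factor. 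Combining this with a Löh-type argument along a residually-finite chain of covers of $M_1$ whose intersection of $\pi_1$'s is trivial (e.g.~the subgroups $\langle \alpha^n, \beta^n, \gamma^n\rangle$, of index $n^3$ in the integer Heisenberg lattice) would produce sublinear growth $\|N_n\|_\Z = o(n^3)$ along a cofinal subsequence of covers, and the monotonicity of $\|M_i\|_\Z/i$ under divisibility would then transfer this bound to the full sequence $\{M_i\}$, proving $\fv_\Z(f) = 0$.
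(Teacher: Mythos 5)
Your first two paragraphs essentially rediscover the reduction the paper also performs (its Step~1): a prism-type chain of bounded norm fills $f^m_*(c)-c$ up to a residual $2$-chain of winding number $m$ supported on the core circle $\gamma$, and the whole difficulty is to fill that residual with $o(m)$ integral $3$-simplices. (Deducing $\fv_\Z(f)=0$ from sublinearity of $\|M_i\|_\Z$ via Theorem~\ref{thm: IntegralFillingVolumeAndCovering} is legitimate, even though the paper argues in the opposite direction.) However, your third paragraph, which is supposed to supply the key step, does not. Pushing a fundamental cycle forward along a degree-$k^2$ self-covering of the Heisenberg manifold yields, at no cost, a cycle representing $k^2$ times the fundamental class, but over $\Z$ there is no way to divide by $k^2$, while transferring (summing lifts) along the cover multiplies the $\ell^1$-norm by the degree; so the claim that one can ``multiply the fundamental class by $k^2$ while increasing the $\ell^1$-norm only by a sub-polynomial factor'' is precisely the statement to be proved, with no mechanism offered --- it is the same integral obstruction that, as you note yourself, kills the real averaging argument. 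Moreover, a L\"oh-type argument along the covers $N_n$ of degree $n^3$ can only give $\|N_n\|_\Z=o(n^3)$; since $N_n$ is itself a torus bundle with monodromy a power of the shear (indeed $N_n\cong M_n$), this says $\|M_n\|_\Z=o(n^3)$, which is trivially true (as $\|M_n\|_\Z\leq n\|M_1\|_\Z$) and far weaker than the needed $\|M_n\|_\Z=o(n)$; the divisibility-monotonicity of $\|M_i\|_\Z/i$ cannot upgrade a bound relative to degree $n^3$ into one relative to $n$. A decisive sanity check: your mechanism uses nothing specific to the Dehn twist. An Anosov monodromy gives an aspherical sol-manifold with amenable, residually finite fundamental group and stable integral simplicial volume $0$, so the same residually-finite/self-similar reasoning would apply verbatim --- yet there $\fv_\Z>0$, as the paper crucially uses. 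Hence any argument of this shape must fail.

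What is missing is exactly the paper's explicit construction of a logarithmic-cost filling of the residual on the circle. After reducing $f_*^{2^{2k}}(c)-c$, by a chain $\tau_k$ with $\norml{\tau_k}=3$, to a chain $s_k^0$ supported on $\gamma$, the paper interpolates through the chains $s_k^i=\left[0,2^i,2^{2k-i}+2^i\right]-\left[0,2^{2k-i},2^{2k-i}+2^i\right]$, $i=0,\dots,2k$, and shows that each step $s_k^{i+1}-s_k^i$ is filled at uniformly bounded cost by pushing forward two fixed $3$-chains $\alpha,\beta$ under the maps $\phi_k^i(x,y)=2^i x+2^{2k-i}y$ from $T$ to $S^1$. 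Summing up, $\fnorm{f_*^{2^{2k}}(c)-c}\leq 3+k\left(\norml{\alpha}+\norml{\beta}\right)$, which is logarithmic in $2^{2k}$ and immediately gives $\fv_\Z(f)=0$. This doubling trick (or some substitute producing a genuinely sublinear integral filling of the winding-$m$ residual) is the technical heart of the theorem, and your proposal does not contain it.
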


In order to prove the theorem,
we fix a fundamental cycle $c \in C_2(T,\Z)$ of the torus, and we construct efficient chains that fill the boundaries $c - f_*^{2^{2k}} (c)$.

Let $e_i \in \R^{k +1}$ be the $i$-th vector in the canonical basis of $\R^{k+1}$ and $\Delta^k \subset \R^{k+1}$ the convex hull of $e_1,\ldots,e_{k +1}$, that is
\[\Delta^k =\left\{\left.\sum_{i = 1}^{k +1}\lambda_ie_i \right|0 \leq \lambda_i \leq 1, \ \sum_{i = 1}^{k +1}\lambda_i = 1 \right\}\ .\]

For $n,m \in \N$ and $p_1,\ldots,p_{n +1} \in \R^m$, we denote by $\str_m(p_1,\ldots,p_{n +1})$ the straight $n$-simplex in $\R^m$ with vertices $p_1,\ldots,p_{n +1}$, that is
the singular simplex defined by the map
\[\begin{matrix}
		\str_m(p_1,\ldots,p_{n +1})\colon & \Delta^n                        & \to     & \R^m                               \\
		                                  & \sum_{i = 1}^{n+1} \lambda_ie_i & \mapsto & \sum_{i = 1}^{n+1} \lambda_ip_i\ .
	\end{matrix}\]

Let $T$ be the torus $\R^2/\Z^2$, $\pi \colon  \R^2 \to T$ the quotient map. We denote by
\[[p_1,\ldots,p_{n +1}]\in C_n(T,\Z)\]
the $n$-simplex in $T$ given by $\pi \left(\str_2(p_1,\ldots,p_{n +1})\right)$.

\begin{rmk}\label{rmk: translation invariant}
	As $T =\R^2/\Z^2$, we have that for every $i,j \in \Z$ the two symbols $[(x_1,y_1),(x_2,y_2),\ldots,(x_{n +1},y_{n +1})]$ and $[(x_1 +i,y_1 + j),(x_2 + i,y_2 + j),\ldots,(x_{n +1} + i,y_{n +1} + j)]$ represent the same simplex in $C_n(T,\Z)$.
\end{rmk}
With this notation, a fundamental cycle of the torus is given by
\[c = [(0,0),(1,0),(1,1)] - [(0,0),(0,1),(1,1)] \]
(see Figure \ref{fig: c0}).

Let $f \colon  T=\R^2/\Z^2 \to T =\R^2/\Z^2$ be a Dehn twist along a homotopically non-trivial simple closed curve. Up to conjugation by an element of the mapping class group, we can suppose $f$ is represented by the matrix
\[\begin{pmatrix} 1&1 \\ 0&1 \end{pmatrix}\ .\]
Let $f_*\colon C_*(T,\Z)\to C_*(T,\Z)$ be the induced map on the set of chains.

We have
\[f_*^{k}(c)= [(0,0),(1,0),(k +1,1)]- [(0,0),(k,1),(k +1,1)],\]
that is a fundamental cycle of $T$ as well (Figure \ref{fig: c0}).

\begin{figure}[h]
	\includegraphics{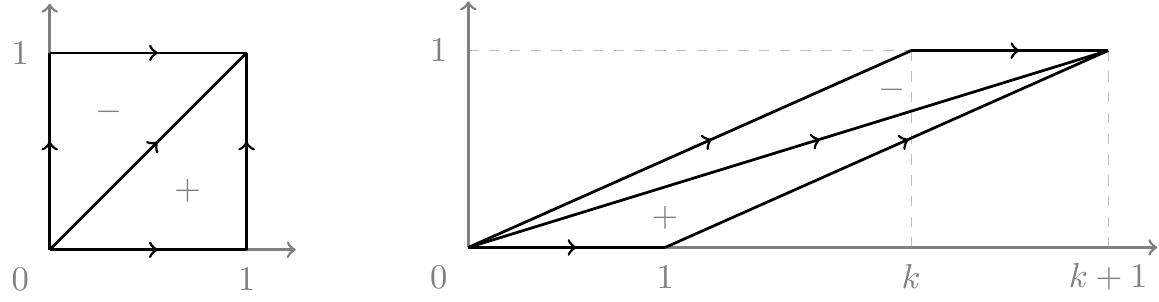}
	\caption{
		A lift of the cycles $c$ and $f_*^k(c)$ in $\R^2$.
	}\label{fig: c0}
\end{figure}

The chain $f_*^{2^{2k}} (c) - c \in C_2(T,\Z)$ is a boundary as it is the difference of two fundamental cycles.
For $k \in \N$, we will construct chains ${w_{k}} \in C_3(T,\Z)$ such that $\partial w_k=f_*^{2^{2k}} (c) - c$ and with the property that  $\|w_{k}\|_1$ grows linearly with $k$.
This will prove that the filling norm of $f_*^{{k}} (c)- c$ grows less than linearly.

We subdivide the construction in steps.
\begin{step}\label{step: collasso}
	Let
	\begin{align*}
		\tau_k = & + [(0,0),(1,0),(2^{2k}+1,0),(2^{2k}+1,1)]      \\
		         & - [(0,0),(2^{2k},0),(2^{2k}+1,0),(2^{2k}+1,1)] \\
		         & + [(0,0),(2^{2k},0),(2^{2k},1),(2^{2k}+1,1)]
	\end{align*}
	(see Figure~\ref{prysm}) and set
	\[b_k =[(0,0), (1,0),(2^{2k}+1,0)]- [(0,0),(2^{2k},0),(2^{2k} +1,0)]\ .\]
	Then we have $\norml{\tau_k}= 3$ and  $\partial \tau_k + b_k =f_*^{2^{2k}} (c) - c$, since
	\begin{align*}
		\partial \tau_k + b_k = & 
+ [(1,0),(2^{2k}+1,0),(2^{2k}+1,1)] 
- [(0,0),(2^{2k},0),(2^{2k},1)]\\
								&+ [(0,0),(1,0),(2^{2k}+1,1)]
								- [(0,0),(2^{2k},1),(2^{2k}+1,1)]\\
								&+ [(2^{2k},0),(2^{2k},1),(2^{2k}+1,1)]
								- [(2^{2k},0),(2^{2k}+1,0),(2^{2k}+1,1)],
\end{align*}
where the right-hand side of the first line is equal to $0$ since the 
triple $((1,0),(2^{2k}+1,0),(2^{2k}+1,1))$ is obtained from $((0,0),(2^{2k},0),(2^{2k},1))$
via the translation by $(1,0)$ (see Remark \ref{rmk: translation invariant} with $(i,j)=(1,0$)), the second line is equal to $f_*^{2^{2k}} (c)$, and the third line is equal to $-c$
(see again Remark \ref{rmk: translation invariant} with $(i,j)=(2^k,0)$).
\end{step}

\begin{figure}[h]
	\includegraphics{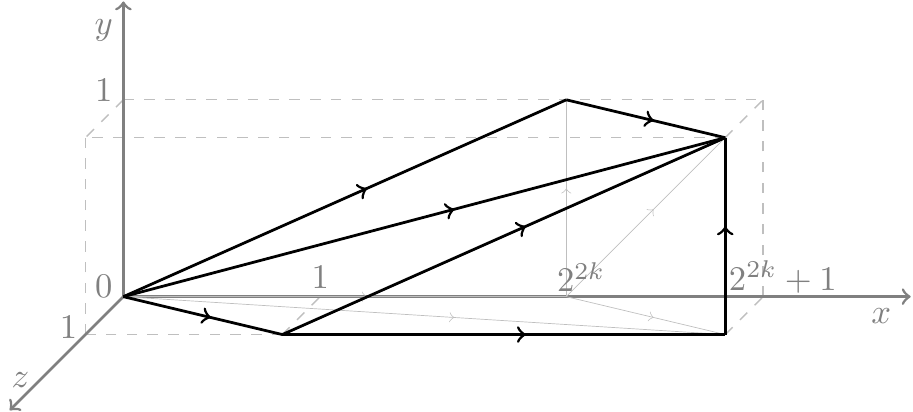}
	\caption{
		The chain $\tau_k$ is obtained by projecting the triangulation of the prism in the picture on the $xy$-plane.
	}\label{prysm}
\end{figure}

Let $S^1=\mathbb{R}/\mathbb{Z}$ and 
denote by 
$\gamma \colon S^1 = S^1 \times \{0 \} \hookrightarrow T = S^1 \times S^1$  the simple closed curve in $T$ defined by
$\gamma ([t])=[(t,0)]$.
The boundary $b_k$ is supported in the image of $\gamma$,  hence it represents a chain $s_k^0 \in C_2(S^1,\Z)$ in $S^1$ as well, where we identify $S^1$ with the image of $\gamma$.
Notice that with this notation we have $\gamma_* (s_k^0) = b_k$.

In order to find an efficient chain $\omega \in C_3(T,\Z)$ whose boundary is $b_k$, it is enough to find an efficient chain $\omega_{S^1} \in C_3(S^1,\Z)$ whose boundary is $s_k^0$ and set $\omega = \gamma_*(\omega_{S^1})$.

As we did for the torus, we denote by $\pi_{S^1}\colon \R \to \R/\Z = S^1$ the quotient map, and,  if $x_1,\ldots,x_{n +1} \in \R$, then we denote by $[x_1,\ldots ,x_{n +1}]$ the straight $n$-simplex in $S^1$ given by $\pi_{S^1 } (\str_1[x_1,\ldots,x_{n +1}])$.
According to this notation, we have
\[s_k^0 = \left[0,1,2^{2k}+1\right]-\left[0,2^{2k},2^{2k}+1\right]\ .\]
\begin{rmk}
	As in Remark \ref{rmk: translation invariant}, for every $i \in \Z$, the symbols $[x_1,\ldots ,x_{n +1}]$ and $[x_1 + i,\ldots ,x_{n +1} +i]$ denote the same simplex in $S^1$.
\end{rmk}
\begin{step}\label{step: passo induttivo}
	For every $i=0,\dots, 2k$ let
	\[s^i_k= \left[0,2^i,2^{{2k}- i}+2^i\right]- \left[0,2^{{2k}- i},2^{{2k}- i}+2^i\right]\ .\]
	Then there exist two chains $\alpha,\beta,\ \in C_3(T,\Z)$ (not depending on $k$ and $i$) and maps $\phi^i_{k} \colon  T \to S^1$ such that $  (\phi_{k}^i)_*(\partial\alpha)+(\phi_{k}^{i +1})_*(\partial\beta)= s_k^{i +1}-s_k^i$.
\end{step}
\begin{proof}
	Recall that
	\[c = [(0,0),(1,0),(1,1)] - [(0,0),(0,1),(1,1)]\]
	and set
	\begin{align*}
		a  = & + [(0,0),(1,0),(1,1/2)] - [(0,0),(0,1/2),(1,1/2)] +  \\
		     & + [(0,1/2),(1,1/2),(1,1)] - [(0,1/2),(0,1),(1,1)]\,, \\
		b =  & + [(0,0),(1/2,0),(1/2,1)]-[(0,0),(0,1),(1/2,1)]  +   \\
		     & + [(1/2,0),(1,0),(1,1)] - [(1/2,0),(1/2,1),(1,1)]\,.
	\end{align*}
	The chains $a,b,c$ are all fundamental cycles of the torus, thus they are cobordant (Figure \ref{fig: somecycles}).
	In particular, there exist $\alpha,\beta\in C_3(T,\Z)$ such that
	\begin{align*}\partial \alpha =& a - c \\ \partial \beta =& c - b \,.\end{align*}
	Let us set
	\[
		\begin{matrix}
			\widetilde{\phi}_k^i \colon & \R^2  & \to     & \R^1                     \\
			                            & (x,y) & \mapsto & 2^{i} x +2^{2k - i} y\ .
		\end{matrix}
	\]
	We define $\phi_k^i \colon  T =\R^2/\Z^2 \to S^1 =\R/\Z$ as the quotient map induced by $\widetilde{\phi}_k^i $. Then (see Figure \ref{fig: somecycles})
	\[\left(\phi_k^i \right)_* (c)=  \left[0,2^i,2^{2k - i}+2^i\right]- \left[0,2^{2k -i},2^{2k - i}+2^i\right]= s^i_k                                                   \]
	and
	\begin{align*}
		\left(\phi_k^i \right)_*(a)= & + \left[0,2^{i},2^{2k - i -1}+2^{i}\right] - \left[0,2^{2k - i -1},2^{2k - i -1}+2^{i}\right] +                              \\
		                             & + \left[2^{2k - i -1},2^{2k - i -1}+2^{i},2^{2k - i}+2^{i}\right]   - \left[2^{2k - i -1},2^{2k - i},2^{2k - i}+2^{i}\right] \\
		=                            & + \left[0,2^{i},2^{2k - i -1}+2^{i}\right] - \left[0,2^{2k - i -1},2^{2k - i -1}+2^{i}\right] +                              \\
		                             & + \left[2^{i},2^{i +1},2^{2k - i -1}+2^{i +1}\right]   - \left[2^{i},2^{2k - i -1}+2^i,2^{2k - i -1}+2^{i +1}\right]         \\
		=                            & \left(\phi_k^{i +1}\right)_* (b).
	\end{align*}
	Putting everything together, we have
	\begin{multline*}
		\left(\phi_k^i\right)_*\left(\partial \alpha\right)+ \left(\phi_k^{i +1}\right)_* \left(\partial \beta\right)=\left(\phi_k^i\right)_*\left(a - c\right)+ \left(\phi_k^{i +1}\right)_* \left(c - b \right)                                                                                     \\
		=                                                                  (\phi_k^{i +1})_*(b) - s_k^i + s_k^{i +1} -(\phi_k^{i +1})_* (b)= s_k^{i +1}-s_k^i\ .
	\end{multline*}
\end{proof}
\begin{figure}[h]
	\includegraphics{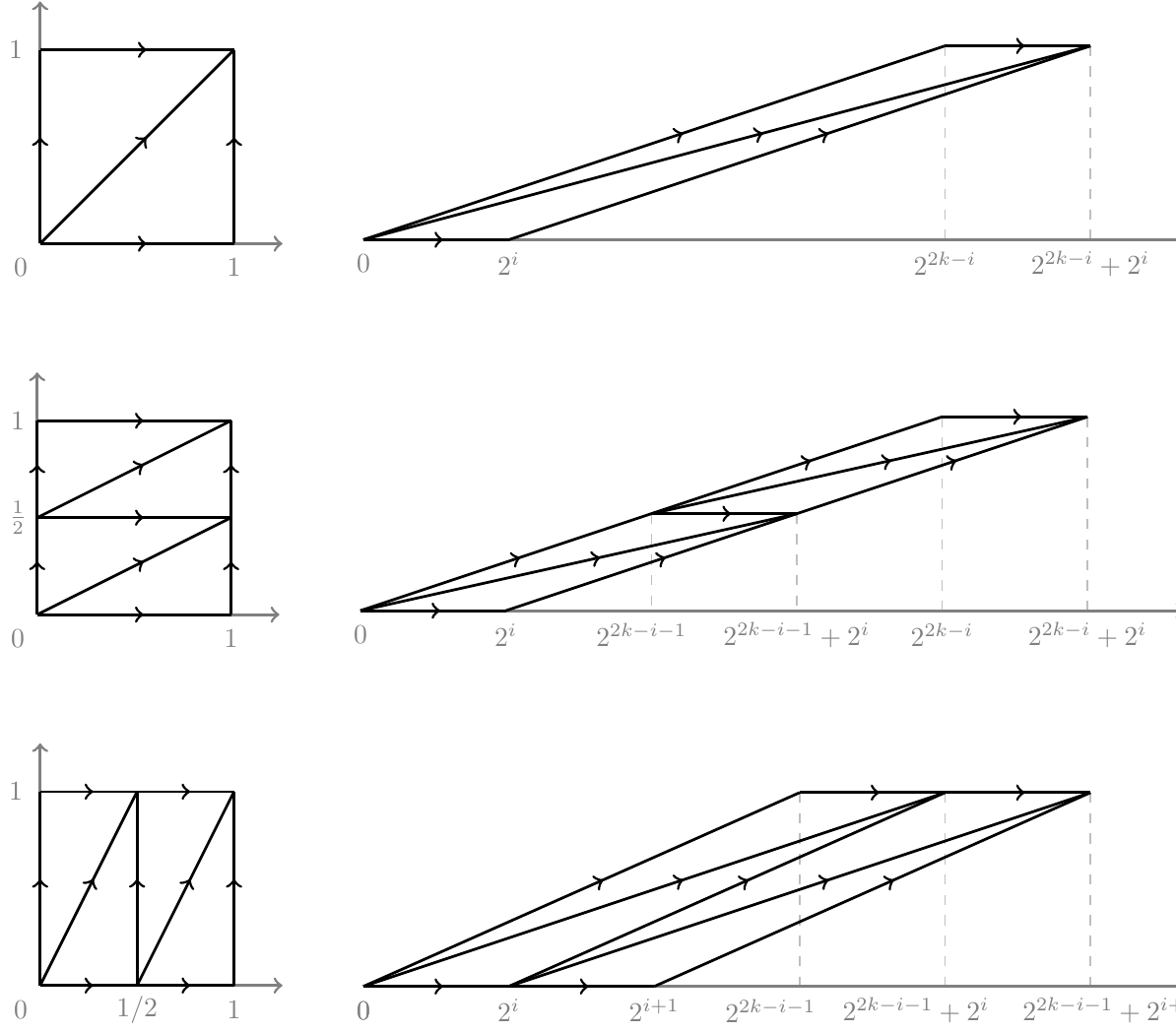}
	\caption{
		Lifts in $\R^2$ of the fundamental cycles $c,a,b$ are represented on the left. The parallelograms on the right, once projected on the horizontal axis, represent  lifts in $\R$ of the cycles $\phi_k^i(c),\phi_k^i(a),\phi_k^{i +1} (b)$. Notice that the lift of $\phi_k^{i +1} (b)$ is obtained from the lift of $\phi_k^i(a)$ by translating the upper parallelogram on the right of the lower parallelogram.
	}\label{fig: somecycles}
\end{figure}

\begin{step}\label{step: filling}
	Set
	\[\omega_k =\sum_{i = 0}^{k -1} (\phi_k^i)_*(\alpha)+ (\phi_k^i)_*(\beta)\ .\]
	Then
	\[\partial \omega_k =\sum_{i = 0}^{k -1} \left((\phi_k^i)_*(\partial\alpha)+ (\phi_k^{i +1})_*(\partial\beta)\right) = \sum_{i = 0}^{k -1} (s_k^{i +1}- s_k^i)= s_k^k -s_k^0 =- s_k^0 \,,	\]
	where in the last equality we used the fact that  $s_k^k = 0$.
	Moreover,
	\[
		\norml{w_k}\leq
		\sum_{i = 0}^{k -1}\Big( \norml{(\phi_k^i)_*(\alpha)}+ \norml{(\phi_k^i)_*(\beta)}\Big)\leq
		k \left(\norml{\alpha}+ \norml{\beta}\right)\ .
	\]
\end{step}
We are now ready to conclude the proof of Theorem \ref{thm:fv_Z-Dehn}.
Thanks to Step \ref{step: collasso} and Step \ref{step: filling} we have
\[
	f_*^{2^{2k}} (c)- c =
	\partial \tau_k + b_k =
	\partial \tau_k + \gamma_*(s^0_k) =
	\partial \tau_k + \gamma_*\left(-\partial \omega_k \right)=
	\partial \left( \tau_k - \gamma_*\left( \omega_k \right)\right)
\]
and
\[\norml{\tau_k - \gamma_*\left( \omega_k \right)}\leq
	\norml{\tau_k}+ \norml{\gamma_*\left( \omega_k \right)}\leq 3 + k \left(\norml{\alpha}+ \norml{\beta}\right)\ .\]
Thus
\begin{align*}
	\fv_\Z(f)= &
	\lim_{k \to \infty}\frac{\fnorm{f_*^{k} (c)- c }} {k}                                                   \\
	=          & \lim_{k \to \infty}\frac{\fnorm{f_*^{2^{2k}} (c)- c }} {2^{2k}}                            \\
	\leq       & \lim_{k \to \infty}\frac{\norml{\tau_k - \gamma_*\left( \omega_k \right)}} {2^{2k}}        \\
	\leq       & \lim_{k \to \infty}\frac{3 + k \left(\norml{\alpha}+ \norml{\beta}\right)} {2^{2k}} = 0\ . \\
\end{align*}
\qed

\subsection{Filling Volume of Dehn twists on higher genus  surfaces}
In this subsection we exploit the vanishing of the integral filling volume for Dehn twists on the torus to obtain a similar result for higher genus surfaces.

Let us first  fix some notation.
Let $S$ be a surface. With a slight abuse, if $\gamma\colon S^1 \to S$ is a closed curve, then
we denote by $\gamma$ also the image $\im(\gamma)\subset S$ of the map $\gamma$.
Moreover,
if $S'\subset S$ is a closed subset of $S$, then we denote by $S \cminus S'$ the completion of $S \setminus S'$ with respect to some auxiliary Riemannian metric on $S$.
In this way, when $\gamma$ is a simple closed curve, the surface $S \cminus \gamma$ is a (not necessarily connected) compact surface with two boundary components, both canonically identified with $\gamma$.

\begin{thm}\label{thm: DehnTwistSuperficie}
	Let S be a closed surface of genus $g \geq 1$, $\gamma \colon S^1 \to S$ be a simple closed curve in $S$ and $f \colon S \to S$ be the Dehn twist along $\gamma$. Then $\fv_\Z(f)= 0$.
\end{thm}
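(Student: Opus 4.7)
My plan is to transplant the construction from the proof of Theorem~\ref{thm:fv_Z-Dehn} into an annular neighborhood of $\gamma$ in $S$. If $\gamma$ is null-homotopic then $f$ is isotopic to the identity and $\fv_\Z(f) = 0$ holds trivially, so I may assume $\gamma$ is essential. Using the homotopy invariance of $\fv_\Z$ established in~\cite{bertolotti_length_2022}, I first replace $f$ by an isotopic representative supported in an annular neighborhood $A$ of $\gamma$. I identify $A$ with $(\R/\Z) \times [0,1]$ in such a way that $f|_A$ coincides with the linear shear $(x,y) \mapsto (x+y, y)$; by $x$-periodicity this map is the identity on both components of $\partial A$, and hence extends by the identity to a self-homeomorphism of $S$. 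I then choose an integral fundamental cycle $c$ of $S$ whose restriction to $A$ equals the torus fundamental cycle $c_T = [(0,0),(1,0),(1,1)] - [(0,0),(0,1),(1,1)]$ used in the proof of Theorem~\ref{thm:fv_Z-Dehn}, and whose restriction to $S \setminus A$ is any chain realizing the (relative) fundamental class of the complement with the appropriate matching boundary. Since $f$ acts as the identity off $A$, we have $f_*^n(c) - c = f_*^n(c_T) - c_T$ as chains in $A$ for every $n \geq 1$.

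The heart of the argument is a direct transfer of the torus construction. The $3$-chain $\tau_k$ of Step~\ref{step: collasso} becomes a chain in $A \subseteq S$: its vertices have $y$-coordinates in $\{0,1\}$ and the straight simplices project well into $A$, while the identity $\partial \tau_k + b_k = f_*^{2^{2k}}(c) - c$ uses only integer $x$-translations (which survive in $A$ thanks to the $x$-periodicity of $A$), so the identity continues to hold in $S$, with $\|\tau_k\|_1 = 3$. As for $\omega_k$, observe that the chain $\omega_k \in C_3(S^1, \Z)$ of Step~\ref{step: filling}, satisfying $\partial \omega_k = -s_k^0$ with $\|\omega_k\|_1 \leq k(\|\alpha\|_1 + \|\beta\|_1)$, lives entirely in $S^1$: the auxiliary torus in whose chain complex $\alpha$ and $\beta$ are originally constructed plays no role beyond being the source of the maps $\phi_k^i$ into $S^1$, so $\omega_k$ itself may be reused verbatim. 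Pushing $\omega_k$ forward along the parametrization $\gamma \colon  S^1 \to S$ gives $\gamma_*(\omega_k) \in C_3(S, \Z)$ with $\partial \gamma_*(\omega_k) = -b_k$. Consequently, $\tau_k - \gamma_*(\omega_k) \in C_3(S, \Z)$ is an integral filling of $f_*^{2^{2k}}(c) - c$ of norm at most $3 + k(\|\alpha\|_1 + \|\beta\|_1) = O(k)$, and dividing by $2^{2k}$ yields $\fv_\Z(f) = 0$.

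The main obstacle I expect is the bookkeeping required to verify that the construction of $\tau_k$ and the corresponding boundary identity descend to the annulus $A$: Step~\ref{step: collasso} of the torus proof is phrased via translations in $\Z^2$, and one has to check that every cancellation involved uses only the $x$-component. This is indeed the case, because the vertices of all simplices appearing in the computation have $y$-coordinates in $\{0, 1\}$ and the translations identifying simplices are always of the form $(i, 0)$ with $i \in \Z$. Once this verification is carried out, the rest of the proof proceeds verbatim as in Theorem~\ref{thm:fv_Z-Dehn}, with the single additional ingredient of composing with $\gamma \colon  S^1 \to S$ at the end.
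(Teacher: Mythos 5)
Your argument is correct, but it takes a genuinely different route from the paper. The paper does not work with chains on $S$ at all: it invokes Theorem~\ref{thm: IntegralFillingVolumeAndCovering} to rewrite $\fv_\Z(f)$ as $\lim_k \|S\rtimes_{f^k}S^1\|_\Z/k$, describes $S\rtimes_{f^k}S^1$ as $(S\cminus\gamma)\times S^1$ reglued along its two boundary tori via the $k$-th power of a torus Dehn twist (using \cite[Lemma 3.3]{foozwell_four-dimensional_2016} for the identification), and then bounds $\|S\rtimes_{f^k}S^1\|_\Z$ by a fixed relative fundamental cycle plus a filling of $b_1-(g'_*)^{-k}(b_1)$ inside the boundary torus, so that Theorem~\ref{thm:fv_Z-Dehn} is quoted as a black box. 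You instead localize the explicit chain construction of Theorem~\ref{thm:fv_Z-Dehn} to an annular neighbourhood $A$ of $\gamma$: your observation that every identification used in Step~\ref{step: collasso} is a translation of the form $(i,0)$, that all simplices involved have $y$-coordinates in $[0,1]$, and that $\omega_k$ of Step~\ref{step: filling} already lives in $C_3(S^1,\Z)$ (so it can be pushed into $S$ along the relevant boundary circle of $A$) is exactly what is needed, and the degenerate chain $b_k$ is indeed supported on that circle. Two small points deserve a sentence each in a written-up version: the existence of a fundamental cycle of $S$ restricting to $c_T$ on $A$ requires correcting the boundary of a relative cycle of $\overline{S\setminus A}$ by a $2$-chain supported in the boundary circles (both boundaries represent the same class in $H_1$ of each circle), and the circle along which you push $\omega_k$ is the boundary component $\{y=0\}$ of $A$, a parallel copy of $\gamma$, rather than $\gamma$ itself. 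What your approach buys is self-containedness: it works directly from the definition of $\fv_\Z$, avoiding both Theorem~\ref{thm: IntegralFillingVolumeAndCovering} and the $3$-dimensional cut-and-paste identification; the price is that you must re-verify the chain-level bookkeeping (essentially proving an annular version of Theorem~\ref{thm:fv_Z-Dehn}), whereas the paper's reduction keeps the higher-genus case short by reusing the torus theorem verbatim.
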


\begin{proof}
	By Theorem~\ref{thm: IntegralFillingVolumeAndCovering}, it is sufficient to show that
	\[ \lim_{k \to \infty} \frac{\|S\rtimes_{f^k} S^1\|_\Z} {k}=0 .\]
	To this aim, we first describe $S\rtimes_{f^k} S^1$ in an alternative way.

	The compact manifold $N'= ({S \cminus \gamma}) \times S^1$ has two boundary components $T_1, T_2$, both canonically identified
	with the torus $T=\gamma \times S^1$; we denote by
	\[i \colon  T_1 \hookrightarrow T_2 \]
	the map given by the composition of the two canonical identifications $T_1 \to T$ and $T \to T_2$. If we endow $T_1$ and $T_2$ with the orientations induced by $N'$, then the map $i$ is orientation-reversing.

	Let $c' \in C_3(N',\partial N', \Z)$ be a relative integral fundamental cycle of the pair $(N',\partial N')$.
	We have $\partial c' = b_1 + b_2$, where $b_i \in C_2(T_i,\Z)$ is a fundamental cycle.
	The chains $-i_*(b_1)$ and $b_2$ are both fundamental cycles of $T_2$, thus there exists $w \in C_3(T_2,\Z)\subseteq C_3(N',\Z)$ such that $\partial w =i_*(b_1)+b_2$.
	Set $c = c'-w$, so that $c$ is still a relative fundamental cycle of $(N',\partial N')$ and $\partial c = b_1 - i_*(b_1)$.

	Let now $g' \colon  T_1 \to T_1$ be the Dehn twist along $\gamma$, let $g_k \colon  T_1 \to T_2$ be the composition $i \circ (g')^k$, and $N_k$ be the manifold obtained from  $N'$ by gluing the two boundary components along $g_k$.
	Let us denote by $p \colon  N'\to N_k$ the projection.
	Then, $p_*(c)\in C_3(N_k,\Z)$ is a chain whose boundary in $N_k$ is contained inside $p(T_1)$, and it is given by
	\[
		\partial p_*(c)
		= p_*(b_1)- p_*(i_*(b_1))
		= p_*(b_1) - p_*(g'_*)^{- k}(b_1)
		= p_*(b_1 - (g'_*)^{- k}(b_1))\ .
	\]
In particular, the filling norm of $\partial p_*(c)$ as a boundary in $N_k$ is bounded from above by the filling norm of $p_*(b_1 - (g'_*)^{- k}(b_1))$ as a  boundary in $p(T_1)\cong T$.
	By \cite[Lemma 3.3]{foozwell_four-dimensional_2016}, the manifold $N_k$ is homeomorphic to the manifold $S\rtimes_{f^k} S^1$.
	Thus,
	\begin{align*}
		\fv_\Z(f)= & \lim_{k \to \infty} \frac{\|S\rtimes_{f^k} S^1\|_\Z} {k}                                               \\
		=          & \lim_{k \to \infty} \frac{\|N_k\|_\Z} {k}                                                              \\
		\leq       & \lim_{k \to \infty} \frac{\norml{p_*(c)}+ \fnorm{\partial p_*(c)}} {k}                                 \\
		\leq       & \lim_{k \to \infty} \frac{\norml{c}+ \fnorm{p_*(b_1 - (g'_*)^{- k}(b_1))}} {k}                         \\
		\leq       & \lim_{k \to \infty} \frac{\norml{c}} {k}+ \lim_{k \to \infty}\frac{\fnorm{b_1 - (g'_*)^{- k}(b_1)}}{k} \\
		=          & 0 + \fv_\Z(g')                                                                                         \\
		=          & 0 \,,
	\end{align*}
	where the last equality is due to Theorem \ref{thm:fv_Z-Dehn}.
\end{proof}

\subsection{Integral filling volume of surface automorphisms}
In this subsection we use the previous results to completely understand, in dimension 2, when the integral filling volume of an orientation preserving self-homeomorphism vanishes.

\begin{varthm}[Theorem~\ref{filvol:surfaces}]
	Let $S$ be the closed oriented surface of genus $g$ and let
	$f\colon S\to S$ be an orientation-preserving  self-homeomorphism of $S$. Then
	$\fvz(f)>0$ if and only if one of the following conditions is satisfied:
	\begin{itemize}
		\item  $g = 1$ and $f$ is Anosov;
		\item $g>1$ and there exist a natural number $k\in\mathbb{N}$ and a subsurface $S'\subseteq S$ (that is, a codimension-0  submanifold $S'$ of $S$, possibly with boundary) such that $f^k$ is isotopic to a homeomorphism
			$h\colon S \to S$ that satisfies $h(S')=S'$ and restricts to a pseudo-Anosov self-homeomorphism $h|_{S'}$ of $S'$.
	\end{itemize}

\end{varthm}

\begin{proof}
	We consider separately the case in which $S$ is a sphere, a torus or a surface of genus $g>1$.

	We denote by $\phi$ the class of $f$ in the mapping class group $\mcg(S)$ of $S$, and we recall from~\cite{bertolotti_length_2022}
	that the filling volume of $f$ only depends on its homotopy class $\phi$ (hence we can denote $\fv_\Z(f)$ by $\fv_\Z(\phi)$).

	If $S$ is a sphere, then $\mcg(S)$ is the trivial group and so the integral filling volume is always zero.

	If $S$ is a torus, then $\phi$ is periodic, Anosov or reducible \cite[Theorem 13.1]{farb_primer_2011}:
	\begin{itemize}
		\item if $\phi$ is periodic, then $\fv_\Z(\phi)= 0$ \cite[Theorem 2]{bertolotti_length_2022};
		\item if $\phi$ is Anosov, then $\fvz(\phi)>0$ \cite[Proposition 4.3]{bertolotti_length_2022};
		\item if $\phi$ is reducible, then it is represented by some power $h^k$ of some Dehn twist $h \colon  T \to T$ (\cite[Section 13.1]{farb_primer_2011}).
		      By Theorem  \ref{thm:fv_Z-Dehn} and \cite[Theorem 2]{bertolotti_length_2022}, it follows that $\fvz(\phi)= k \cdot\fvz(h)= 0$.
	\end{itemize}

	If $S$ is a surface of genus $g >1$, then $\phi$ is periodic, pseudo-Anosov or reducible \cite[Theorem 13.2]{farb_primer_2011}:
	\begin{itemize}
		\item If $\phi$ is periodic, then $\fv_\Z(\phi)= 0$ \cite[Theorem 2]{bertolotti_length_2022};
		\item if $\phi$ is pseudo-Anosov, then $\fvz(\phi)\geq \fv_\R(\phi)>0$ \cite[Corollary 9] {bertolotti_length_2022} (in this case we take $S'= S$ and $k = 1 $).
		\item If $\phi$ is reducible, then there exists some positive power $\phi^k$ of $\phi$ that can be represented by a map $h$ with the following property \cite[Corollary 13.3]{farb_primer_2011}: there exist pairwise disjoint annuli  $A_1,\ldots ,A_m \subset S$  such that $\restr{h} {A_i}$ is a power of the Dehn twist on $A_i$ (for every $1 \leq i \leq m$) and the restriction of $h$ on each connected component of $S \setminus \cup_{i = 1}^{m} A_i$ is either pseudo-Anosov or the identity.

		      If on at least one of these connected components, say $S'\subset S$, the restriction $\restr{h} {S'}$ of $h$ is pseudo-Anosov, then, by \cite[Corollary 9] {bertolotti_length_2022} and \cite[Theorem 2]{bertolotti_length_2022}, we have

		      \[\fvz(\phi)= \frac{\fvz(h)}{k}\geq \frac{ \fv_\R(h)}{k}>0 \,.\]

		      Otherwise, $h$ restricts to the identity on each connected component of $S \setminus \cup_{i = 1}^{m} A_i$, and if we denote by $g_i$ the Dehn twist on $A_i$, then

		      \[h = g_1^{k_1}\circ\ldots \circ g_m^{k_m}\,.\]
		      As $g_1,\ldots, g_m$ are Dehn twists on pairwise disjoint annuli, they pairwise commute.
		      It follows from \cite[Theorem 2]{bertolotti_length_2022} and Theorem \ref{thm: DehnTwistSuperficie} that
		      \[
			      \fvz(\phi) = \frac{\fvz(h)}{k}
			      \leq \frac{\sum_{i = 1}^m \left(k_i \cdot \fvz(g_i)\right)}{k}
			      = 0. \]
	\end{itemize}

\end{proof}

\section{Triangulation complexity of nilmanifolds and solmanifolds}\label{complexity:sec}
Recall from the introduction that all the manifolds appearing in this paper are assumed to be closed, oriented and connected.
Let $M$ be an $n$-manifold. Borrowing the terminology from low-dimensional topology, we
understand that a  \emph{triangulation}  of $M$ is an expression of $M$ as a union of $n$-dimensional simplices with some of their facets identified in pairs via affine homeomorphisms: in particular, self-adjacences of faces of the same simplex are allowed, i.~e.~closed simplices need not be embedded in $M$.
The complexity $\Delta(\mathcal T)$ of a triangulation $\mathcal T$ of $M$ is the number of $n$-simplices  in $\mathcal T$.

\begin{defn}\label{complexity:def}
	The \emph{$\Delta$-complexity} $\Delta(M)$ of a manifold $M$ is defined as the minimum of $\Delta(\mathcal{T})$, as $\mathcal{T}$ ranges
	over the  triangulations of $M$. If $M$ does not admit any triangulation, then we set $\Delta (M)=\infty$.
\end{defn}

\begin{rmk}
	It is easy to check that the second barycentric subdivision of a triangulation (in our sense) of $M$ yields a strict triangulation of $M$, i.~e.~a description of $M$
	as the geometric realization of a simplicial complex. As a consequence, if $\Delta'(M)$ denotes the minimal number of top-dimensional simplices in a strict triangulation of $M$,
	then
	$$
		\Delta(M)\leq \Delta'(M)\leq (n+1!)^2 \Delta(M)\ ,
	$$
	where $n=\dim M$.
	In particular, the numerical invariants $\Delta'$ and $\Delta$ are equivalent, according to the definition given in the introduction.
\end{rmk}

This section is devoted to the proof of Theorem~\ref{thm: triangulation complexity of nilmanifolds}, which ensures that, if
$\phi \colon T \to T$ is a non-trivial power of a Dehn twist or an Anosov homeomorphism of the torus, then
the $\Delta$-complexity
of the torus bundle $T\rtimes_{\varphi^n} S^1$ grows linearly with respect to $n$.
As mentioned in the introduction, the Anosov case is already treated in \cite{lackenby_triangulation_2022}, where a quantitative estimate on the growth is also provided. Hence, our contribution mainly consists in adapting Lackenby's and Purcell's strategy to the case of Dehn twists.

Before going into the proof of Theorem~\ref{thm: triangulation complexity of nilmanifolds} we introduce
the spine graph of a surface.
As discussed in~\cite{lackenby_triangulation_2019,lackenby_triangulation_2022}, this object
plays a fundamental role in the computation of the $\Delta$-complexity of surface bundles over the circle.

\subsection{Spine graph on a surface}
Let $S$ be a closed surface.
A \emph{spine} $\Gamma$ on $S$ is an embedded graph without vertices of degree $1$ and $2$, and whose complement $S \cminus \Gamma$ is a disc. If $S$ is endowed with a fixed cellular structure, then we say that the spine $\Gamma$ is cellular if it is a subcomplex of $S$.

An \emph{edge contraction} on a spine $\Gamma$ of $S$ is the move inside $S$ that collapses an edge of $\Gamma$ to a point (thus collapsing two vertices of $\Gamma$ to a unique vertex).
An \emph{edge expansion} is the reverse of this operation.

The \emph{spine graph} $\spl(S)$  on a closed surface $S$ is the graph whose vertices are spines on $S$, considered up to isotopy, in which two vertices are joined by an edge if and only if they differ by an edge expansion/contraction.

Note that the mapping class group $\mcg(S)$ acts by isometries on 
$\spl(S)$.
\begin{prop}[{\cite[Proposition 2.7]{lackenby_triangulation_2019}}]\label{prop: quasi-isometry 3 graphs}
	Let $S$ be a closed surface of genus $g \geq 1$, and  fix  a
	spine $\Gamma$  in $S$. Then, the map
	\[
		\begin{matrix}
			\mcg(S) & \to     & \spl(S)             \\
			\gamma  & \mapsto & \gamma \cdot \Gamma
		\end{matrix}
	\]
	is a quasi-isometry between  the mapping class group $\mcg(S)$ and the spine graph $\spl(S)$.
\end{prop}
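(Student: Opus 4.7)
The plan is to derive the proposition from the Milnor--Schwarz lemma, applied to the natural action of $\mcg(S)$ on the spine graph $\spl(S)$ equipped with the combinatorial path metric in which each edge has unit length. Three ingredients are required: the action is by isometries, vertex stabilizers are finite (properness), and the quotient has finite diameter (coboundedness). Once these are in hand, Milnor--Schwarz immediately yields that any orbit map $\gamma \mapsto \gamma \cdot \Gamma$ is a quasi-isometry between $\mcg(S)$ (equipped with a word metric for any finite generating set) and $\spl(S)$.

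The first two ingredients are quick to establish. The action of $\mcg(S)$ preserves both the set of isotopy classes of spines and the edge-move relation defining adjacency in $\spl(S)$, so it acts by graph automorphisms and hence by isometries. For finite stabilizers: a spine $\Gamma$ together with the disc $S \cminus \Gamma$ determines a CW decomposition of $S$ with a single $2$-cell. Any mapping class that fixes $\Gamma$ setwise up to isotopy induces a combinatorial automorphism of this finite decomposition; the kernel of this induced map consists of mapping classes supported in the single $2$-cell relative to its boundary, and is therefore trivial (by the Alexander trick on the disc). Since the combinatorial automorphism group of a finite CW complex is finite, the stabilizer of $\Gamma$ in $\mcg(S)$ is finite as well.

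The real content, and the point at which I expect the main technical obstacle, lies in establishing coboundedness. The route I would follow is two-fold. First, show that every spine on $S$ can be connected, by a sequence of edge contractions, to a trivalent spine, that is, one with exactly $4g-2$ vertices and $6g-3$ edges, as forced by the Euler-characteristic identity $V - E = 1 - 2g$ (applied to the cell decomposition with one $2$-cell) together with the trivalence constraint $2E = 3V$. Second, invoke the change-of-coordinates principle for surfaces to conclude that $\mcg(S)$ acts transitively on the set of isotopy classes of trivalent spines, since these are dual to one-vertex triangulations of $S$ of a fixed combinatorial type, and $\mcg(S)$ acts transitively on the latter.

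The subtle step is controlling the first reduction: a priori, passing from an arbitrary spine to a trivalent one could require many contractions, so the argument must interleave combinatorial edge reductions with applications of the $\mcg(S)$-action in order to show that, modulo mapping classes, every spine lies within uniformly bounded distance from $\Gamma$. This is the portion that requires the careful combinatorial analysis of Lackenby and Purcell. Once coboundedness is secured, the Milnor--Schwarz lemma delivers the proposition.
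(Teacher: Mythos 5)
Your approach is the same as the paper's: the paper does not reprove this statement but simply observes that Lackenby and Purcell, in proving their Proposition~2.7, verify exactly the hypotheses of the Milnor--\v{S}varc Lemma for the $\mcg(S)$-action on $\spl(S)$, so the orbit map is a quasi-isometry; your isometric-action and finite-stabilizer arguments match what is needed, and like the paper you defer the hard combinatorial control to Lackenby--Purcell.

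Two inaccuracies in your coboundedness sketch are worth repairing, though neither is fatal. First, $\mcg(S)$ does \emph{not} act transitively on isotopy classes of trivalent spines (equivalently, on one-vertex triangulations) when $g\geq 2$: there are several combinatorial types of trivalent graphs with disc complement, and the action is transitive only within a fixed type. What Milnor--\v{S}varc actually requires is a cobounded (finitely many orbits of vertices) action, and this holds for a cheaper reason: the Euler characteristic computation $V-E=1-2g$ for the decomposition with one $2$-cell, together with all vertex degrees being at least $3$, forces $E\leq 6g-3$, so there are only finitely many combinatorial types of spines, and change of coordinates (cut along the spine, match the combinatorics, and reglue the complementary disc) gives transitivity within each type; no reduction to the trivalent case is needed. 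Second, a trivalent spine is reached from an arbitrary spine by edge \emph{expansions} (splitting vertices of degree $>3$), not contractions; contractions terminate at the one-vertex rose. Finally, to apply Milnor--\v{S}varc one should also note that $\spl(S)$ with the path metric is a connected, locally finite graph (connectivity of the spine graph and local finiteness of expansion moves), which you leave implicit.
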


The statement of {\cite[Proposition 2.7]{lackenby_triangulation_2019}} only ensures that $\mcg(S)$ 
 and $\spl(S)$ are quasi-isometric to each other.
However, in the proof Lackenby and Purcell show that the action of $\mcg(S)$ on $\spl(S)$ verifies all the hypotheses of the Milnor-\v{S}varc Lemma and thus the latter guarantees that the map described in Proposition~\ref{prop: quasi-isometry 3 graphs} is a quasi-isometry.

\begin{lemma}\label{lemma: linear growth Dehn twist on spines}
	Let $S$ be a closed surface and let $\phi \in \mcg(S)$ be an element of infinite order.
	Then there exists a constant $k_\spin>0$ such that, for every spine $\Gamma$ of $S$ and every $n\in\mathbb{N}$, we have
	$$
		\frac{d_{\spl(S)}(\phi^n (\Gamma), \Gamma)}{n}\geq k_\spin\ .
	$$
\end{lemma}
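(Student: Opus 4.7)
The plan is to reduce the question to the mapping class group via Proposition~\ref{prop: quasi-isometry 3 graphs} and then to invoke the classical fact that an infinite-order element of $\mcg(S)$ generates an undistorted cyclic subgroup. Fix a base spine $\Gamma_0$ and let $A\geq 1$, $B\geq 0$ be the constants of the $(A,B)$-quasi-isometry $\gamma\mapsto \gamma\cdot\Gamma_0$ between $\mcg(S)$ (with some word metric) and $\spl(S)$ provided by Proposition~\ref{prop: quasi-isometry 3 graphs}. Since the word metric on $\mcg(S)$ is left-invariant and the action of $\mcg(S)$ on $\spl(S)$ is isometric, for every $\gamma\in\mcg(S)$ and every $n$ we have
\[
d_{\spl(S)}(\phi^n\gamma\Gamma_0, \gamma\Gamma_0)\ \geq\ \tfrac{1}{A}\, d_{\mcg(S)}(\phi^n, \id) - B.
\]
The Milnor--\v{S}varc argument underlying Proposition~\ref{prop: quasi-isometry 3 graphs} also shows that the orbit $\mcg(S)\cdot\Gamma_0$ is cobounded in $\spl(S)$, so the same inequality remains valid, with $B$ replaced by a larger universal constant $B'$, for an arbitrary spine $\Gamma$ in place of $\gamma\Gamma_0$ (using the triangle inequality and the fact that $\phi^n$ acts as an isometry on $\spl(S)$).

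It then suffices to establish a linear lower bound $d_{\mcg(S)}(\phi^n,\id)\geq Cn - E$ for some constants $C>0$ and $E\geq 0$, i.e., the undistortion of $\langle\phi\rangle$ inside $\mcg(S)$. This is a classical property of infinite-order mapping classes: for a pseudo-Anosov class it follows from the positive asymptotic translation length on Teichm\"uller space; for (a power of) a Dehn twist, from the positive translation length of the corresponding annular subsurface projection; and for a general reducible class, by applying the previous two cases to a suitable restriction dictated by the Nielsen--Thurston canonical decomposition. Combining such a bound with the previous step yields a linear estimate
\[
d_{\spl(S)}(\phi^n\Gamma,\Gamma)\ \geq\ \tfrac{C}{A}\, n - \tilde{E}
\]
valid for every spine $\Gamma$ and every $n$, where $\tilde{E}$ depends only on $S$ and $\phi$.

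For $n$ larger than a threshold $N_0$ determined by the above constants this immediately gives $d_{\spl(S)}(\phi^n\Gamma,\Gamma)/n \geq C/(2A)$. For $1\leq n\leq N_0$ we use a separate argument: the stabilizer of any spine in $\mcg(S)$ is finite, while $\phi^n$ has infinite order whenever $n\neq 0$, so $\phi^n$ cannot fix $\Gamma$ and hence $d_{\spl(S)}(\phi^n\Gamma,\Gamma)\geq 1$, yielding $d_{\spl(S)}(\phi^n\Gamma,\Gamma)/n \geq 1/N_0$. Taking $k_\spin := \min\{C/(2A),\, 1/N_0\}$ provides the desired uniform lower bound for every spine $\Gamma$ and every $n\in\mathbb{N}$. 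The main obstacle in this outline is the undistortion input from mapping class group theory; in the cases actually needed for Theorem~\ref{thm: triangulation complexity of nilmanifolds} (powers of a Dehn twist on the torus, or an Anosov class on the torus) it admits a direct elementary verification via the action of $SL_2(\Z)$ on the hyperbolic plane.
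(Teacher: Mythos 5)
Your overall strategy (reduce to $\mcg(S)$ via the quasi-isometry of Proposition~\ref{prop: quasi-isometry 3 graphs} and invoke undistortion of infinite-order elements, which is exactly the paper's citation of Farb--Lubotzky--Minsky) is the right one, but the step that is supposed to give uniformity over all spines has a genuine error. You claim that left-invariance of the word metric and the isometric action give
$d_{\spl(S)}(\phi^n\gamma\Gamma_0,\gamma\Gamma_0)\geq \tfrac1A\,d_{\mcg(S)}(\phi^n,\id)-B$ for every $\gamma$. Left-invariance gives $d_{\mcg(S)}(\phi^n\gamma,\gamma)=\|\gamma^{-1}\phi^{-n}\gamma\|$, i.e.\ the word length of a \emph{conjugate} of $\phi^{n}$, not $d_{\mcg(S)}(\phi^n,\id)$; the identity you use would need right-invariance, which the word metric does not have. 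Equivalently, $d_{\spl(S)}(\phi^n\gamma\Gamma_0,\gamma\Gamma_0)=d_{\spl(S)}\bigl((\gamma^{-1}\phi^n\gamma)\Gamma_0,\Gamma_0\bigr)$, and the only elementary comparison is $\|\gamma^{-1}\phi^n\gamma\|\geq\|\phi^n\|-2\|\gamma\|$, whose additive error is unbounded as $\gamma$ ranges over $\mcg(S)$. Hence your subsequent assertion that the lower bound holds for an arbitrary spine ``with $B$ replaced by a larger universal constant $B'$'' is unjustified as written, and this is precisely the delicate point of the lemma, since the statement must be uniform in $\Gamma$.

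The gap is fixable. One repair within your scheme: the stable word length $\tau(g)=\lim_n\|g^n\|/n$ is a conjugacy invariant, and by subadditivity (Fekete) it equals $\inf_n\|g^n\|/n$, so $\|\gamma^{-1}\phi^n\gamma\|=\|(\gamma^{-1}\phi\gamma)^n\|\geq\tau(\phi)\,n$ for all $\gamma$ and $n$, with $\tau(\phi)>0$ by the undistortion theorem; this restores a bound uniform in $\gamma$ and hence in $\Gamma$. The paper avoids the issue differently and more economically: it applies Fekete's Lemma directly in $\spl(S)$ (the sequence $n\mapsto d_{\spl(S)}(\phi^n\Gamma,\Gamma)$ is subadditive because the action is isometric), so the limit $k_\spin$ equals the infimum and the bound automatically holds for \emph{every} $n$; a two-sided triangle-inequality comparison shows $k_\spin$ does not depend on $\Gamma$; and positivity then follows from undistortion plus the quasi-isometry applied to a single base spine. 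This also makes your separate small-$n$ argument via finiteness of spine stabilizers unnecessary (that argument is plausible, but it is an extra input you would have to justify, e.g.\ by the Alexander method).
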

\begin{proof}
	Let us fix a spine $\Gamma$ of $S$. The map
	$$
		n\mapsto d_{\spl(S)}(\phi^n (\Gamma), \Gamma)\ ,
	$$
	is subadditive, hence, by Fekete's Lemma, the limit $$k_\spin=\lim_{n \to \infty} \frac{d_{\spl(S)}(\phi^n(\Gamma),\Gamma)} {n}$$
	exists and is equal to $\inf\{d_{\spl(S)}(\phi^n(\Gamma),\Gamma)/n\, ,\ n\geq 1\}$.

	Let us first show that $k_\spin$ does not depend on the chosen spine $\Gamma$. Indeed, if $\Gamma'$ is any spine of $S$, then
	\begin{align*}
		d_{\spl(S)} (\phi^n(\Gamma),\Gamma)\leq &
		d_{\spl(S)} (\phi^n(\Gamma),\phi^n(\Gamma')) + d_{\spl(S)} (\phi^n(\Gamma'),\Gamma')+d_{\spl(S)} (\Gamma',\Gamma) \\
		=                                       & d_{\spl(S)} (\phi^n(\Gamma'),\Gamma')+2d_{\spl(S)} (\Gamma',\Gamma)
	\end{align*}
	and, analogously,
	\[
		d_{\spl(S)} (\phi^n(\Gamma'),\Gamma')\leq
		d_{\spl(S)} (\phi^n(\Gamma),\Gamma)+2d_{\spl(S)} (\Gamma',\Gamma)\ .
	\]
	These inequalities readily imply that
	\[
		\liminf_{n \to \infty} \frac{d_{\spl(S)} (\phi^n(\Gamma),\Gamma)} {n} =
		\liminf_{n \to \infty}\frac{d_{\spl(S)} (\phi^n(\Gamma'),\Gamma')} {n}\ ,
	\]
	i.~e.~$k_\spin$ does not depend on $\Gamma$.

	In order to conclude  we are left to show that $k_\spin>0$.
	To this aim, let us fix a finite set of generators of $\mcg(S)$.
	By \cite[Theorem 1.2]{farb_rank-1_2001}, elements of infinite order in $\mcg(S)$ are undistorted, i.~e.~there exists a constant $c>0$ such that
	\[d_{\mcg(S)} (\phi^n,\id)\geq c n \,.\]

	By Proposition \ref{prop: quasi-isometry 3 graphs}, there are two constants $a >0,b \geq 0$ such that
	\[d_{\spl(S)} (\phi^n(\Gamma),\Gamma)\geq a \cdot d_{\mcg(S)} (\phi^n,\id)- b \geq acn - b\ , \]
	hence
	$$
		k_\spin=\lim_{n\to \infty}\frac{d_{\spl(S)}(\phi^n(\Gamma),\Gamma)} {n}\geq \lim_{n\to \infty}\frac{acn-b} {n}=ac>0\ .
	$$
	\end{proof}

\subsection{Triangulation complexity of nilmanifolds and solmanifolds}
In this subsection we prove Theorem \ref{thm: triangulation complexity of nilmanifolds}, which we recall here for the convenience of the reader:

\begin{varthm}[Theorem~\ref{thm: triangulation complexity of nilmanifolds}]
	Let $f \colon T \to T$ be a self-homeomorphism of the torus representing an infinite-order element in $\mcg(T)$. Then  the $\Delta$-complexity  of the mapping torus of $f^i$ has linear growth with respect to $i$, i.~e.~there exist constants $0<k_1\leq k_2$ such that
	$$
		k_1\leq \frac{\Delta \left(T \rtimes_{f^i} S^1\right)}{i}\leq k_2
	$$
	for every $i\geq 1$.
\end{varthm}

Let $f \colon T \to T$ be a homeomorphism of the torus, and let $\mathcal T$ be a triangulation of
$$
	M = T\rtimes_f S^1= T \times [0,1]/\sim\ ,
$$
where $(x,0)\sim(f(x),1)$.

We denote by $\mathcal{H}'$ the dual handle decomposition induced by $\mathcal{T}$,
whose $i$-dimensional handles, $i=0,1,2,3$, are obtained from a thin neighbourhood of the $(3-i)$-skeleton of $\mathcal{T}$ by removing a thin
neighbourhood of the $(3-i-1)$-skeleton (see e.g.~\cite[Section 2]{lackenby_triangulation_2022}).

Recall that the \emph{weight} of a surface $S$  embedded in $M$
and in general position with respect to $\mathcal T$ is defined as the cardinality of the intersection between $S$ and the $1$-skeleton of $\mathcal T$.
We choose a fiber $S\subseteq M$ of least weight with respect to $\mathcal{T}$. Up to isotopy, we may assume that $S$ is in normal position with respect to the handle decomposition $\mathcal  H'$ (see~\cite[Section 4]{lackenby_triangulation_2019} for the definition of normal position).

As explained in~\cite[Section 4]{lackenby_triangulation_2019}, the surface
$S$ intersects the handles of $\mathcal H'$ in properly embedded discs, thus
inheriting a cellular structure form $\mathcal H'$. Moreover, the handle decomposition $\mathcal{H}'$ of $M$ induces a handle decomposition $\mathcal{H}$
of the compact manifold with boundary $M \cminus S$ obtained by cutting $M$ along $S$.
Since $S$ is a fiber of a torus bundle over the circle, we may identify $M\cminus S$ with
the product $T\times [0,1]$.
In this way we obtain natural identifications between $S$ and $T\times \{0\}$, $T\times \{1\}$. If we identify both $T\times \{0\}$ and $T\times \{1\}$ with the torus $T$ via the obvious projections,
then the cellular structure of $S$ induces two cellular structures on $T$, which are taken one onto the other by the homeomorphism $f$. Henceforth, we fix on $T$ the cellular
structure induced by $S$ via the identification $S\cong T\times \{0\}$.

We choose a   cellular spine $\Gamma_0$ of $T$. By construction, the spine $f(\Gamma_0)$ is cellular with respect to the
cellular structure induced by the identification of $T$ with $T\times \{1\}$. Therefore, by
\cite[Lemma 2.2]{lackenby_triangulation_2022} (which ensures that the handle decomposition $\mathcal{H}$ is \emph{pre-tetrahedral} according to the terminology
therein),
\cite[Theorem 2.16]{lackenby_triangulation_2022} (which implies that $\mathcal{H}$ admits no annular simplification, see again~\cite{lackenby_triangulation_2022} for the definition of such a notion) and \cite[Lemma 2.5 and Theorem 9.1]{lackenby_triangulation_2022} (applied to $f(\Gamma_0)$, with the roles of $T\times \{0\}$ and $T\times \{1\}$ reversed), there exists a cellular spine $\Gamma_1$ such that
\[d_{\spl(T)} (f(\Gamma_0),\Gamma_1)\leq k_{\hand} \Delta(\mathcal T),\]
where $k_{\hand}$ is a universal constant, which is indipendent of the choosen cellular spine $\Gamma_0$ (see \cite[Theorem 9.1]{lackenby_triangulation_2022}).
By iterating this process, we obtain a sequence of cellular spines $\{\Gamma_i \}$ satisfying
\[d_{\spl(T)} (f(\Gamma_{i -1}),\Gamma_i)\leq k_{\hand} \Delta(\mathcal T),\]
where at each step of the iteration we take as input $\Gamma_{i -1}$ instead of $\Gamma_0$.
As there are only finitely many cellular spines in $T \times \{0 \}$, there are two integers $r<s$ such that $\Gamma_r =\Gamma_s$.
By relabeling, we can assume $r = 0,s = m$.
Thus,
\begin{align*}
	d_{\spl(T)}(f^m(\Gamma_0),\Gamma_0)= & d_{\spl(T)}(f^m(\Gamma_0),\Gamma_m)                                                \\
	\leq                                 & \sum_{i = 0}^{m -1}d_{\spl(T)}(f^{m - i}(\Gamma_{i}),f^{m - i -1} (\Gamma_{i +1})) \\
	=                                    & \sum_{i = 0}^{m -1}d_{\spl(T)}(f(\Gamma_{i}),\Gamma_{i +1})                        \\
	\leq                                 & m k_{\hand} \Delta(\mathcal T)\ .
\end{align*}

Now, let $f \colon T \to T$ be a self-homeomorphism representing an infinite-order element in $\mcg(T)$, and let $\mathcal T_i$ be a triangulation of $M_i=T\rtimes_{f^i} S^1$
such that $\Delta(M_i)=\Delta (\mathcal{T}_i)$.
By the inequality above, there exist a positive integer $m_i$ and a cellular spine $\Gamma_0^{(i)}$ such that
\begin{equation}\label{eq: triang comlexity power dehn twist}
	\Delta(M_i)=\Delta(\mathcal T_i)\geq
	\frac{d_{\spl(T)} \left((f^{i})^{m_i}\left(\Gamma_0^{(i)}\right),\Gamma_0^{(i)}\right)} {m_ik_{\hand}}\geq \frac{ k_\spin}{k_\hand}i\ ,
\end{equation}
where the last inequality is due to Lemma \ref{lemma: linear growth Dehn twist on spines}.
This proves  that $\Delta(M_i)$ grows at least linearly in $i$.

On the other hand, if $\mathcal T$ is a triangulation of $M$, then we can lift it to the $i$-sheeted cover $M_i \to M$, obtaining a triangulation of $M_i$ with $i \Delta(\mathcal T)$ tetrahedra. Therefore, $\Delta(M_i)\leq i \Delta(\mathcal T)$, proving that the function $\Delta(M_i)$ grows at most linearly with respect to  $i$.

\bibliography{math_papers}
\bibliographystyle{alpha}
\end{document}